\theoremstyle{plain}
\newtheorem{theorem}{Theorem}[section]
\newtheorem{lemma}[theorem]{Lemma}
\newtheorem{corollary}[theorem]{Corollary}
\newtheorem{conjecture}[theorem]{Conjecture}
\newtheorem{observation}[theorem]{Observation}
\theoremstyle{definition}
\theoremstyle{remark}
\newtheorem{problem}{Problem}
\begin{document}
\title{On snarks that are far from being 3-edge colorable}

\date{\today} 

\author{Jonas Hägglund}
\address{Department of Mathematics and Mathematical Statistics, Ume\aa\ University, SE-901 87 Ume\aa, Sweden}
\email{jonas.hagglund@math.umu.se}

\begin{abstract} 
	In this note we construct two infinite snark families which have high oddness and low circumference compared to the number of vertices. 
	Using this construction, we also give a counterexample to a suggested strengthening of Fulkerson's conjecture by showing that the Petersen graph is not the only cyclically 4-edge connected cubic graph which require at least five perfect matchings to cover its edges. Furthermore the counterexample presented has the interesting property that no 2-factor can be part of a cycle double cover. 
\end{abstract} 

\maketitle


\section{Introduction}
	A cubic graph is said to be \emph{colorable} if it has a 3-edge coloring and \emph{uncolorable} otherwise. 
	A \emph{snark} is an uncolorable cubic cyclically 4-edge connected graph. It it well known that an edge minimal counterexample (if such exists) to some classical conjectures in graph theory, such as the cycle double cover conjecture  \cite{MR538060, MR0325438}, Tutte's 5-flow conjecture \cite{Tutte:1954bh} and Fulkerson's conjecture \cite{MR0294149}, must reside in this family of graphs. 
	
	There are various ways of measuring how far a snark is from being colorable. One such measure which was introduced by Huck and Kochol \cite{Huck:1995fk} is the \emph{oddness}. The oddness of a bridgeless cubic graph $G$ is defined as the minimum number of odd components in any 2-factor in $G$ and is denoted by $o(G)$. Another measure is the \emph{resistance} of $G$, $r_3(G)$, which was introduced by Steffen \cite{Steffen:2004uq} and is defined as the minimal number of edges that needs to be removed from $G$ in order to obtain a 3-edge colorable graph, i.e. $r_3(G):=\min\{|M| :M\subset E(G)\textrm{ and }\chi'(G-M) = 3\}$. It is easy to see that $r_3(G)\leq o(G)$. It is also known that there exists families of snarks where these measures are arbitrary large and arbitrary far apart  \cite{Steffen:2004uq}. 
	
	Although snarks can be arbitrary far from being colorable in the sense of oddness and resistance, this might not be the case when we consider other uncolorablilty measures. The \emph{perfect matching index} $\tau(G)$ of a cubic graph $G$ was introduced in \cite{Fouquet:2009fk} and is defined as the minimum number of perfect matchings needed to cover $E(G)$. A famous conjecture by Fulkerson asserts that every bridgeless cubic graph has a double cover by six perfect matchings \cite{MR0294149}, and if true, would imply that $\tau(G)\leq 5$ for every bridgeless cubic graph $G$. Recently Mazzuoccolo showed that these two statements are in fact equivalent \cite{pre05957651}. However it is not known if there exists a constant $k$ such that $\tau(G)<k$ for all snarks $G$. 
	
	It is easy to see that $\tau(P)=5$, where $P$ is the Petersen graph. It has been proposed \cite{Mazzuoccolo:2011fk, Fouquet:2009fk} that a possible strengthening of Fulkerson's conjecture could be the assertion that the Petersen graph is in fact the only snark with this property and that all other snarks have $\tau=4$. 
	\begin{conjecture}[ \cite{Mazzuoccolo:2011fk, Fouquet:2009fk}] \label{conj:false}
		Let $G$ be a cyclically 4-edge connected cubic graph. If $\tau(G)=5$, then $G$ is the Petersen graph.  
	\end{conjecture}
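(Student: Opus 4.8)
The plan is to \emph{refute} Conjecture~\ref{conj:false} rather than prove it: the abstract already promises a counterexample, so the real goal is to exhibit a cyclically 4-edge connected cubic graph $G\neq P$ with $\tau(G)=5$. The reason $\tau(P)=5$ is highly structured: $P$ has exactly six perfect matchings, they form a double cover, and its fifteen edges are in bijection with the fifteen \emph{pairs} of these matchings, so deleting any two of the six uncovers precisely the edge indexed by that pair and four matchings never suffice. This rigidity is what I would try to reproduce, and the natural place to look is the smallest nontrivial member of the high-oddness, low-circumference families constructed in this note, since such graphs are assembled from Petersen-type blocks whose perfect-matching behaviour is tightly constrained.

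First I would pin down the candidate $G$ and dispatch the easy requirements. That $G\neq P$ is immediate from the vertex count. Cyclic 4-edge connectivity I would verify structurally from the gluing operation used to build the family (or, for a fixed small $G$, by checking that no edge cut of size at most three separates two cycles); for graphs built by dot products of snarks this is routine. For the upper bound $\tau(G)\le 5$ I would simply exhibit five perfect matchings whose union is $E(G)$, reading them off from the block structure; alternatively, if $G$ admits a Fulkerson cover, any five of the six matchings work.

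The crux is the lower bound $\tau(G)\ge 5$, i.e.\ that no four perfect matchings cover $E(G)$. Here the elegant counting that works for $P$ is unavailable, so I would argue directly. Suppose $M_1,\dots,M_4$ cover $E(G)$ and set $n=|V(G)|$. Since $\sum_{i}|M_i|=4\cdot\tfrac{n}{2}=2n$ while $|E(G)|=\tfrac{3n}{2}$, the total multiplicity of overcovered edges is exactly $\tfrac{n}{2}$, so the four matchings must overlap in a very controlled way. I would then restrict the four matchings to each Petersen-type block: inside such a block their traces behave like four of the six perfect matchings of $P$ relative to the block's half-edges, which always leaves an internal edge uncovered unless the block exports the defect through its attaching edges. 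Tracking how these exported defects must match up across the gluing then yields a parity or counting contradiction, showing that some edge of $G$ lies in none of the $M_i$. This block-by-block case analysis, rather than any single slick identity, is the main obstacle, and it is the step most likely to demand either a clever invariant or a finite computer verification.

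Finally, for the additional property advertised in the abstract, that no 2-factor of $G$ can be part of a cycle double cover, I would enumerate the 2-factors of $G$ up to the symmetries of the construction and, for each, show that the odd components it contains obstruct completing it to a family of cycles covering every edge exactly twice; the high oddness of $G$ is precisely what forces every 2-factor to carry enough odd circuits to block such an extension.
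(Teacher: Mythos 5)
Your overall strategy---refuting Conjecture~\ref{conj:false} by exhibiting a counterexample built from Petersen blocks via the blowup construction, with a computer check as the fallback for the lower bound $\tau\geq 5$---is essentially the paper's approach: the counterexample is Blowup$(K_4,C)$ for a 3-cycle $C$ in $K_4$ (34 vertices), and the paper establishes $\tau=5$ purely by computer (Observation~\ref{obs:tau5}); no block-by-block hand proof of the kind you sketch is ever carried out, and the concluding remarks explicitly regret this. The genuine gap in your proposal is in how the candidate graph is selected. You propose ``the smallest nontrivial member of the high-oddness, low-circumference families,'' but high oddness is not the mechanism behind $\tau=5$: for a 3-cycle, Theorem~\ref{thm:blowup} only guarantees oddness $\geq\lceil 3/2\rceil=2$, and the paper stresses that it has \emph{no} characterization of which blowups or semiblowups have perfect matching index 5. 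In particular, the smallest member of the families is a semiblowup (SemiBlowup$(K_4,C)$, on 28 vertices), for which no claim $\tau=5$ is made or known; your heuristic would likely land on that graph, and if it has $\tau=4$ the whole refutation collapses. Naming the precise graph is therefore not a detail one can defer---it is the content of the counterexample.

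The second gap is in your last paragraph: you attribute the property that no 2-factor of the counterexample lies in a cycle double cover to its high oddness, and that reasoning is wrong. Cubic graphs of oddness at most 4 do have cycle double covers, and oddness by itself does not obstruct a particular 2-factor from extending to one; moreover, as noted above, the counterexample's guaranteed oddness is only 2. The paper's route is different and cleaner for the 5-CDC version: by Celmins' theorem (Theorem~\ref{thm:celmins}), $\tau(G)\leq 4$ if and only if $G$ has a 5-CDC in which one color class induces a 2-factor, so $\tau=5$ \emph{immediately} implies that no 2-factor is part of any 5-CDC---no enumeration needed. The stronger statement, that no 2-factor is part of any CDC whatsoever, is again only a computer observation in the paper. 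So your enumeration plan is not wrong as a computation, but the invariant you propose to drive it (oddness) cannot do the job; the correct driver is the perfect matching index itself.
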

	In this paper we present a counterexample to this conjecture. Furthermore we note that this counterexample has the interesting property that no 2-factor can be part of a cycle double cover. We also give simple constructions for two infinite families of snarks with high oddness and resistance compared to the number of vertices. 
		
\section{The construction} \label{section:construct}
	The following simple lemma is well known (see e.g. \cite{MR0382052,Nedela:1996kx, Steffen:2004uq}) and very useful when studying edge colorability of cubic graphs. 
	
	\begin{lemma}[Parity lemma]\label{plemma}
		Let $\phi:E(G)\rightarrow \{1,2,3\}$ be a 3-edge-coloring of a cubic graph $G$. Then for every edge cut $M$ in $G$ we have that $|\phi^{-1}(1)\cap M|\equiv |\phi^{-1}(2)\cap M| \equiv|\phi^{-1}(3)\cap M|\equiv |M|(\mod 2)$
	\end{lemma}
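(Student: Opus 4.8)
The plan is to prove the congruences by a double-counting argument on one side of the cut, exploiting the defining feature of a proper edge coloring of a cubic graph. First I would record the key local observation: since $G$ is cubic and $\phi$ is a proper 3-edge-coloring, the three edges incident to any vertex $v$ receive three \emph{distinct} colors, so each vertex is incident to exactly one edge of each color $i\in\{1,2,3\}$.

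Next I would fix the edge cut $M$ and let $(V_1,V_2)$ be the vertex bipartition it induces, so that $M$ consists precisely of the edges with one endpoint in $V_1$ and one in $V_2$. For a fixed color $i$, I would count the incidences between vertices of $V_1$ and edges of color $i$ in two ways. Summing over $v\in V_1$ the number of color-$i$ edges at $v$ gives exactly $|V_1|$, by the local observation. On the other hand, each color-$i$ edge with both endpoints in $V_1$ contributes $2$ to this sum, while each color-$i$ edge of $M$ (having exactly one endpoint in $V_1$) contributes $1$. Writing $a_i$ for the number of internal color-$i$ edges, this yields the identity
\begin{equation*}
	|V_1| = 2a_i + |\phi^{-1}(i)\cap M|,
\end{equation*}
and hence $|\phi^{-1}(i)\cap M|\equiv |V_1|\pmod 2$ for every color $i$.

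Since the right-hand side $|V_1|$ does not depend on $i$, the three quantities $|\phi^{-1}(1)\cap M|$, $|\phi^{-1}(2)\cap M|$, $|\phi^{-1}(3)\cap M|$ are all congruent to $|V_1|$, and therefore to each other, modulo $2$. Finally, adding the three identities and using $|M| = \sum_{i=1}^{3} |\phi^{-1}(i)\cap M|$ gives $|M|\equiv 3|V_1|\equiv |V_1|\pmod 2$, which ties the cut size into the same residue class and completes the chain of congruences.

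The argument is elementary, so there is no genuinely hard step; the only point requiring care is the bookkeeping in the double count, namely correctly distinguishing edges internal to $V_1$ (counted twice) from edges of $M$ (counted once), and recognizing that the vertex-side total is independent of the chosen color. I would make sure the definition of edge cut as a vertex bipartition is in force, since the same parity computation would apply verbatim to $V_2$ and must give a consistent answer.
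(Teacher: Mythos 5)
Your proof is correct. Note that the paper offers no proof of this lemma at all---it is stated as well known, with references---so there is nothing internal to compare against; your double count of color-$i$ incidences on one side of the cut, giving $|V_1| = 2a_i + |\phi^{-1}(i)\cap M|$, is precisely the standard argument in the cited literature. Your closing caveat is also substantive rather than cosmetic: the lemma genuinely requires $M$ to be of the form $\delta(V_1)$, i.e.\ \emph{all} edges between $V_1$ and its complement, since for an arbitrary disconnecting edge set the congruences can fail (e.g.\ in a $3$-edge-colored $K_4$, the three edges at one vertex together with one further edge disconnect the graph but have color counts $2,1,1$).
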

	
	Following the notation from \cite{MR1385382} we say that a \emph{semiedge} is an edge which is incident to exactly one vertex or one vertex and another semiedge. In the latter case we simply identify it with a normal edge. A \emph{multipole} $M$ is a triple $M=(V, E,S)$ where $V=V(M)$ is the vertex set, $E=E(M)$ is the edge set and $S=S(M)$ is the set of semiedges. A multipole with $k$ semiedges is called a $k$-pole.

	\begin{figure}[h!t]	
		\includegraphics[scale=0.4]{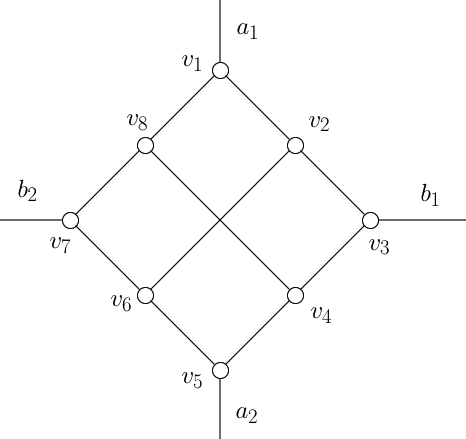}
		\caption{The 4-pole $B$ is constructed from the Petersen graph by removing two adjacent vertices.}
		\label{B-fig}
	\end{figure}

	Let $B$ be the 4-pole obtained by removing two adjacent vertices from the Petersen graph (see Figure \ref{B-fig}). 
	
	\begin{lemma} \label{samecol}
		In every 3-edge coloring of the 4-pole $B$ the semiedges $a_1$ and $a_2$ receive the same color. 
	\end{lemma}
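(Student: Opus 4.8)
The plan is to combine the parity lemma with the uncolorability of the Petersen graph $P$ itself. Write $u$ and $v$ for the two adjacent vertices deleted to form $B$, so the four semiedges of $B$ are precisely the edges at $u$ and $v$ other than $uv$; say $a_1,a_2$ are the two semiedges formerly incident to $u$ and $b_1,b_2$ the two formerly incident to $v$. Fix an arbitrary 3-edge coloring $\phi\colon E(B)\to\{1,2,3\}$. First I would apply the parity lemma (Lemma~\ref{plemma}) to the edge cut $M=\{a_1,a_2,b_1,b_2\}$ consisting of all four semiedges. Since $|M|=4$ is even, each color class meets $M$ in an even number of edges; hence among the four semiedges every color occurs an even number of times, so the color pattern is either ``all four equal'' or ``two colors, each used exactly twice.''

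The crux is to turn the assumption $a_1\neq a_2$ into a contradiction. Suppose $a_1\neq a_2$. Then $a_1$ and $a_2$ are two distinct colors, and by the parity observation each must occur an even number of times among the four semiedges. Since neither color reappears within $\{a_1,a_2\}$, each must reappear within $\{b_1,b_2\}$, which forces $\{b_1,b_2\}=\{a_1,a_2\}$ and in particular $b_1\neq b_2$. Thus $a_1\neq a_2$ necessarily produces the ``crossed'' pattern in which $u$ and $v$ see the same two colors on their surviving edges.

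Finally I would show this crossed configuration lets us reconstruct a proper coloring of $P$, which is impossible. Reattach $u$ to the semiedges $a_1,a_2$, reattach $v$ to $b_1,b_2$, and restore the edge $uv$, coloring it with the unique color $c\in\{1,2,3\}\setminus\{a_1,a_2\}$ (well defined because $a_1\neq a_2$). Then the three edges at $u$, namely $a_1,a_2,c$, are pairwise distinct; and since $\{b_1,b_2\}=\{a_1,a_2\}$ we have both $b_1\neq b_2$ and $c\notin\{b_1,b_2\}$, so the three edges at $v$ are distinct as well. All other vertices keep the colors from $\phi$, yielding a proper 3-edge coloring of $P$ and contradicting that $P$ is uncolorable. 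Hence $a_1=a_2$. The only delicate point is the bookkeeping in the parity step; once the crossed pattern $\{b_1,b_2\}=\{a_1,a_2\}$ with $a_1\neq a_2$ is pinned down as exactly the reconstructible case, the uncolorability of $P$ closes the argument, and by symmetry the same reasoning gives $b_1=b_2$.
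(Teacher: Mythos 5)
Your proof is correct, and it takes a genuinely different route from the paper's. The first step is the same in both: applying the parity lemma (Lemma~\ref{plemma}) to the cut formed by the four semiedges shows that $a_1\neq a_2$ forces $b_1,b_2$ to carry the same two colors. The paper then derives its contradiction \emph{inside} $B$, by a short case analysis that propagates the forced colors along the internal edges named in Figure~\ref{B-fig} ($v_1v_2$, $v_2v_3$, $v_2v_6$, \dots) until two incident edges must share a color. You instead never look inside $B$: you re-attach the deleted vertices $u,v$ and the edge $uv$, give $uv$ the color missing from $\{\phi(a_1),\phi(a_2)\}$, and observe that the crossed pattern is exactly what makes this extension proper at both $u$ and $v$, producing a 3-edge coloring of the Petersen graph and contradicting its uncolorability. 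Your version is shorter, independent of the figure's internal labels, and more general: the identical argument shows that deleting two adjacent vertices from \emph{any} uncolorable cubic graph yields a 4-pole with the same color-forcing property, which is the conceptual reason the lemma holds. What the paper's computation buys in exchange is self-containedness --- it never needs the (standard, but external) fact that the Petersen graph is not 3-edge colorable. One point you should state as fact rather than introduce with ``say'': your argument requires that $a_1,a_2$ are the two semiedges formerly incident to the \emph{same} deleted vertex, which is indeed the convention of Figure~\ref{B-fig}; this is essential, not a harmless normalization, since if $a_1$ and $a_2$ came from different deleted vertices the lemma would be false --- $B$ admits proper colorings in which the two semiedges at $u$ both receive one color and the two at $v$ both receive another, a pattern that passes your parity test and cannot be lifted to a coloring of $P$.
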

	\begin{proof}
		Assume that $a_1$ and $a_2$ have different colors in a 3-edge coloring $\phi$ of $B$. W.l.o.g. we may assume that $\phi(a_1) = 1$ and $\phi(a_2) = 2$. Now, by Lemma \ref{plemma}, one of $b_1$ and $b_2$ must have color 1 and the other color 2. W.l.o.g. we can assume that $\phi(b_1) = 1$ and $\phi(b_2) = 2$. Furthermore we can assume w.l.o.g. that $\phi(v_1v_2) = 2$ and $\phi(v_2v_3) = 3$. Then $\phi(v_2v_6) = 3$, but since one of the edges $v_6v_7$ and $v_5v_6$ must have color 3, we have a contradiction.  
	\end{proof}

	\begin{figure}[h!t]	
		\includegraphics[scale=0.36]{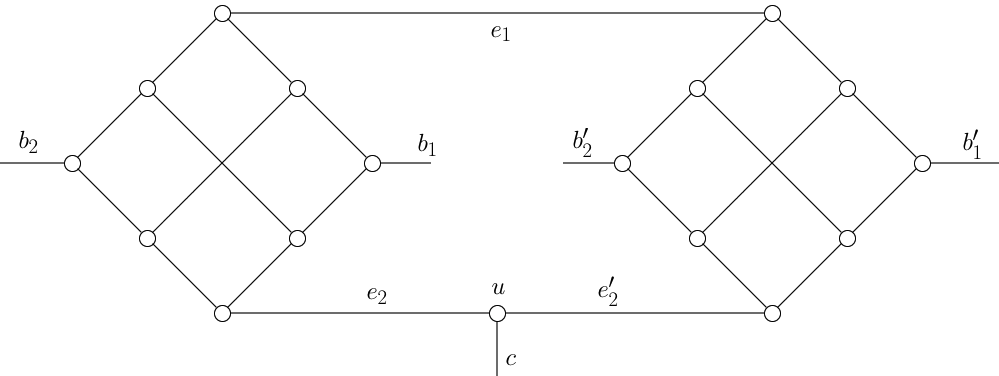}
		\caption{The 5-pole $H_1$.}
		\label{H1-fig}
	\end{figure}
	
	\begin{figure}[h!t]	
		\includegraphics[scale=0.36]{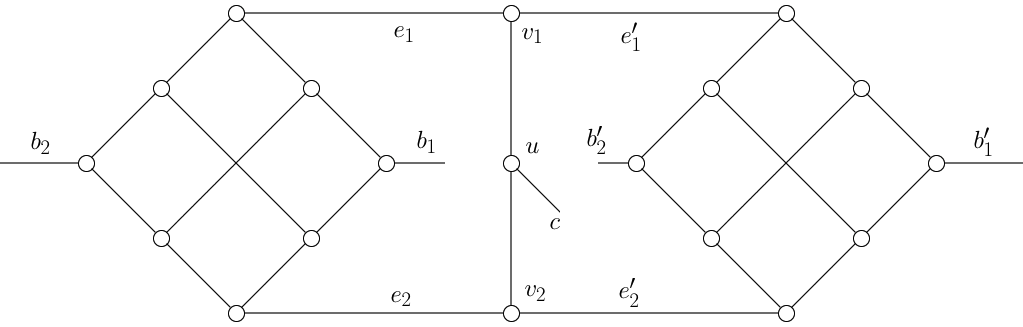}
		\caption{The 5-pole $H_2$.}
		\label{H2-fig}
	\end{figure}
	Now consider the 5-pole $H_1$ obtained the following way: Take two copies $B_1$ and $B_2$ of $B$ and identify the semiedges $b_1$ from $B_1$ with $b_1$ from $B_2$ to get an edge $e_1$. Then add a vertex $u$ which is connected with the semiedges $b_2$ from both $B_1$ and $B_2$ and denote the corresponding new edges by $e_2$ and $e_2'$ (see Figure \ref{H1-fig}). It is straightforward to verify that this 5-pole cannot be 3-edge colorable. 
	\begin{lemma}\label{h1col}
		$H_1$ is not 3-edge colorable. 
	\end{lemma}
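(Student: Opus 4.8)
The plan is to show that the colours of the two ``$b$''-semiedges are forced to collide at the newly added vertex $u$. The engine behind this is an auxiliary observation that complements Lemma \ref{samecol}: in any $3$-edge-coloring of a single copy of $B$, the semiedges $b_1$ and $b_2$ must also receive the same colour.

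First I would establish this auxiliary fact. Fix a $3$-edge-coloring $\phi$ of $B$ and apply the Parity lemma (Lemma \ref{plemma}) to the cut consisting of all four semiedges $\{a_1,a_2,b_1,b_2\}$; equivalently, in the multipole version each colour class is a near-perfect matching, so the number of semiedges of a given colour has the parity of the number of vertices of $B$, which is even. Hence each of the three colours occurs an even number of times among $a_1,a_2,b_1,b_2$. By Lemma \ref{samecol} we have $\phi(a_1)=\phi(a_2)$, so that colour already occurs twice, and the only way to keep all three counts even is to have $\phi(b_1)=\phi(b_2)$ as well. (Alternatively, since $B$ is the Petersen graph minus two adjacent vertices and the Petersen graph is arc-transitive, an automorphism interchanging the pair $\{a_1,a_2\}$ with $\{b_1,b_2\}$ turns Lemma \ref{samecol} directly into $\phi(b_1)=\phi(b_2)$.)

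Next I would assemble $H_1$ and argue by contradiction. Suppose $\phi$ is a $3$-edge-coloring of $H_1$, and let $c=\phi(e_1)$ be the colour of the identified edge. Restricting $\phi$ to the copy $B_1$, the semiedge $b_1$ of $B_1$ is exactly the edge $e_1$, so the auxiliary fact gives $\phi(e_2)=\phi(b_2\text{ of }B_1)=\phi(b_1\text{ of }B_1)=c$. The identical argument inside $B_2$ yields $\phi(e_2')=c$. But $e_2$ and $e_2'$ are the two edges joining $u$ to $B_1$ and $B_2$, hence both incident to $u$, and they now carry the same colour $c$, contradicting that $\phi$ is proper at $u$. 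Therefore no such $\phi$ exists and $H_1$ is not $3$-edge colorable.

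The only genuinely non-routine step is the auxiliary fact $\phi(b_1)=\phi(b_2)$; everything after it is bookkeeping on how the identification forming $e_1$ propagates one forced colour through both copies into the two edges meeting at $u$. I expect the main care to be in phrasing the Parity lemma correctly for the semiedge cut of the multipole $B$ (namely, counting the vertices of $B$ to obtain the even parity), since Lemma \ref{plemma} is stated for edge cuts of a cubic graph rather than for semiedges directly.
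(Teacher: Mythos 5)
Your proof is correct and takes essentially the same route as the paper: assume a $3$-edge-coloring, force $\phi(e_2)=\phi(e_1)=\phi(e_2')$ through the two copies of $B$, and get a contradiction at the vertex $u$. Your auxiliary step deriving $\phi(b_1)=\phi(b_2)$ from Lemma \ref{samecol} via the parity count on the four semiedges is the one point where you are more careful than the paper, which cites Lemma \ref{samecol} (stated for $a_1,a_2$) directly for the $b$-semiedges and leaves that symmetry/parity justification implicit.
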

	\begin{proof}
		Assume the opposite. Then by Lemma \ref{samecol} $e_1$ must have the same color as $e_2$, but $e_1$ must also have the same color as $e_2'$ which is impossible since $e_2$ and $e_2'$ are incident edges. 
	\end{proof}
	
	The 5-pole $H_2$ is formed in a similar way by joining two copies of $B$ with three vertices $v_1,v_2$ and $u$ as shown in Figure \ref{H2-fig}. 
	\begin{lemma}\label{h1col}
		$H_2$ is not 3-edge colorable. 
	\end{lemma}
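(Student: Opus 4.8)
The plan is to mimic the proof just given for $H_1$: I would feed the colour restriction of Lemma~\ref{samecol} into the three connecting vertices $v_1,v_2,u$ and push the forced colours along until two edges of the same colour are compelled to meet at a common vertex.

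First I would isolate the exact consequence of Lemma~\ref{samecol} that is needed. The lemma guarantees that $a_1,a_2$ are monochromatic in every $3$-edge-colouring of $B$. Since the two deleted vertices of the Petersen graph are interchanged by an automorphism of $P$, the identical argument (or, equivalently, applying that automorphism to a putative colouring) shows that the other pair of semiedges is monochromatic as well. Hence in any $3$-edge-colouring $\phi$ of a copy of $B$, the two semiedges by which that copy is attached inside $H_2$ must carry one common colour, which I will call the \emph{colour of the copy}.

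Next I would examine the gadget $\{v_1,v_2,u\}$ under the assumption, for contradiction, that $\phi$ is a $3$-edge-colouring of $H_2$. Write $\beta_1,\beta_2$ for the colours of the two copies $B_1,B_2$. At the connecting vertex where a semiedge of $B_1$ meets a semiedge of $B_2$, these edges carry $\beta_1$ and $\beta_2$; since they are incident, $\beta_1\neq\beta_2$, and the third edge at that vertex is forced to receive the remaining colour $\gamma$. Because each copy presents this same colour on \emph{both} of its attached semiedges, repeating the count at the second connecting vertex again forces an edge coloured $\gamma$; and the figure is set up precisely so that these two $\gamma$-edges are the two edges at $u$. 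Two equally coloured edges at $u$ give the contradiction, exactly as the edges $e_2,e_2'$ did in the $H_1$ argument.

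The only step requiring genuine care is the transfer of Lemma~\ref{samecol} from the pair $a_1,a_2$ to the second pair of semiedges, since the lemma is literally stated only for $a_1,a_2$. I would settle this either by invoking the Petersen automorphism that swaps the two deleted vertices, or simply by rerunning the short parity-lemma argument of Lemma~\ref{samecol} with the two pairs interchanged; after that the remainder is the same routine tracking of forced colours, with the clash occurring at $u$.
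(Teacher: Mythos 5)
Your proof is correct and follows essentially the same route as the paper: apply Lemma~\ref{samecol} to each copy of $B$, note that the two attaching edges meeting at each connecting vertex carry the two (necessarily distinct) copy-colours, so both edges at $u$ are forced to take the third colour, giving the contradiction. Your extra step of transferring Lemma~\ref{samecol} to the pair $b_1,b_2$ (via the automorphism of the Petersen graph swapping the two deleted vertices, or equivalently via the parity lemma) is a sound and worthwhile refinement of a detail the paper glosses over, since in the blowup construction one of the two copies of $B$ is in fact attached through its $b$-semiedges.
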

	\begin{proof}
		Assume the the graph has a 3-edge coloring. Then by Lemma \ref{samecol} $e_1$ must have the same color as $e_2$ and $e_1'$ must have the same color as $e_2'$. W.l.o.g. we may assume that $e_1$ and $e_2$ has color 1 and $e_1'$ and $e_2'$ has color 2. But then both $v_1u$ and $v_2u$ must have color 3, which is impossible. 
	\end{proof}
	We can now use that fact that any cubic graph which contains either $H_1$ or $H_2$ as a subgraph cannot be colorable to create two infinite families of snarks. 
	
	\subsubsection*{Construction 1.}
	Let $G$ be any 2-edge connected cubic graph and let $D$ be a 2-regular subgraph of $G$. Let $C=(v_1,v_2,\dots,v_k)$ be a cycle in D. Now, form a graph by removing all edges $v_iv_{i+1}$ (indices are taken modulo $k$). Then add $k$ copies $B_1,\dots,B_k$ of the 4-pole $B$ and denote the semiedge edges in $B_i$ by $a_l^i, b_l^i$ for $l\in\{1,2\}$. Now, for all $i\in\{1,\dots, k\}$, connect $v_i$ with the semiedge $b_1^i$, the semiedges $a_2^i$ with $b_2^{i+1}$ and $a_1^i$ with $v_{i+1}$. Repeat this process for every cycle in $D$ and denote the resulting proper graph by $G'$. 
	
	We call $G'$ a \emph{semi blowup} of $(G,D)$ and denote $G'$ by $\textrm{SemiBlowup}(G,D)$. 
	\begin{figure}[h!t]	
		\includegraphics[scale=0.46]{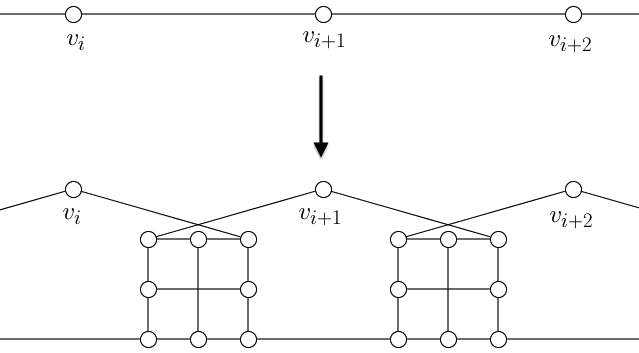}
		\caption{An illustration of Construction 1.}
		\label{blowup-fig}
	\end{figure}
	
	\subsubsection*{Construction 2.}
	The second construction is similar to the previous one. Let $G$ be any 2-edge connected cubic graph and let $D$ be a 2-regular subgraph of $G$. Furthermore let $C=(v_1,v_2,\dots,v_k)$ be a cycle in $D$ and remove all edges $v_iv_{i+1}, i = 1,\dots,k$ and add $k$ copies $B_1,\dots, B_k$ of $B$. Now add two additional vertices $u_i$ and $w_i$. Then add the edges $\{v_iu_i, v_iw_i\}$ and form edges from the semiedges by connecting $u_i$ with $b_1^i$, $w_i$ with $b_2^i$, $a_1^i$ with $u_{i+1}$ and $a_2^i$ with $w_{i+1}$. We continue this process for every cycle in $D$ and call the resulting graph $G''$. We say that $G''$ a \emph{blowup} of $D$ and $G''$ is denoted by $\textrm{Blowup}(G,D)$. 
	
	\begin{figure}[h!t]	
		\includegraphics[scale=0.46]{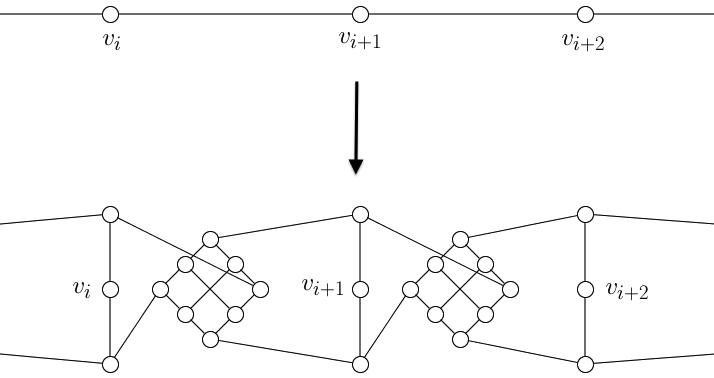}
		\caption{An illustration of construction 2.}
		\label{blowup-fig}
	\end{figure}
	
	\begin{theorem} \label{thm:blowup}
		Let $G$ be a 3-edge connected cubic graph with a 2-regular subgraph $D$. Then $Blowup(G,D)$ and $SemiBlowup(G,D)$ are not 3-edge colorable. Furthermore if $D_1,\dots, D_l$ are the disjoint cycles of $D$, we have that  $o(Blowup(G,D))\geq r_3(Blowup(G,D))\geq \sum_{i=1}^l \lceil\frac{|V(D_i)|}{2}\rceil$ and the same inequality holds for $SemiBlowup(G,D).$
	\end{theorem}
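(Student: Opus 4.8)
\emph{Strategy.} The whole argument rests on one structural fact about the building block $B$: in any $3$-edge-coloring $\phi$ of an undamaged copy of $B$ the semiedges $a_1,a_2$ receive a common colour and $b_1,b_2$ receive a common colour. The first half is precisely Lemma~\ref{samecol}. For the second half I would note that $\{a_1,a_2,b_1,b_2\}$ is a $4$-edge cut, so by the Parity Lemma (Lemma~\ref{plemma}) each colour occurs an even number of times on it; once $\phi(a_1)=\phi(a_2)$ this parity constraint forces $\phi(b_1)=\phi(b_2)$ as well. Thus each copy $B_i$ acts like a pair of parallel wires carrying colours $\alpha_i:=\phi(a_1^i)=\phi(a_2^i)$ and $\beta_i:=\phi(b_1^i)=\phi(b_2^i)$.

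\emph{Non-colorability.} I would simply propagate these two colours across a single junction. For $\mathrm{SemiBlowup}$, the edge $a_2^i b_2^{i+1}$ forces $\alpha_i=\beta_{i+1}$, while the two edges meeting at $v_{i+1}$ that come from $a_1^i$ and from $b_1^{i+1}$ carry colours $\alpha_i$ and $\beta_{i+1}$; these are incident yet equal, a contradiction. For $\mathrm{Blowup}$, at $u_i$ the colours $\beta_i,\alpha_{i-1}$ force the edge $v_iu_i$ to the unique remaining colour, and at $w_i$ the same two colours force $v_iw_i$ to that same colour, so the two edges at $v_i$ agree in colour, which is impossible. Hence neither $G'$ nor $G''$ admits a $3$-edge-coloring.

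\emph{Resistance bound.} Here I would argue one cycle at a time and then add. Fix a cycle $D_t$ of length $k=|V(D_t)|$, and for each of its $k$ junctions let $S_i$ denote the set consisting of all edges of the two blocks involved in the contradiction above together with the finitely many connecting edges used there. The colourability argument shows that if $G-M$ is colourable then $M$ meets every $S_i$: if $M\cap S_i=\emptyset$, the two blocks are intact and the connecting edges are present, so the local contradiction survives. The key bookkeeping is that an edge internal to a block lies in exactly the two sets $S_i$ belonging to its two neighbouring junctions, while each connecting edge lies in exactly one $S_i$; thus every edge meets at most two of the $k$ sets $S_1,\dots,S_k$. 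Writing $\Gamma_t$ for the gadget that replaces $D_t$, double counting gives $k\le\sum_i|M\cap S_i|\le 2\,|M\cap E(\Gamma_t)|$, whence $|M\cap E(\Gamma_t)|\ge\lceil k/2\rceil$. Since the gadgets of distinct cycles are edge-disjoint, summing yields $|M|\ge\sum_t\lceil |V(D_t)|/2\rceil$, i.e.\ $r_3\ge\sum_t\lceil |V(D_t)|/2\rceil$; combined with the standard inequality $o\ge r_3$ recalled in the introduction, this is the claim, and the same reasoning applies verbatim to $\mathrm{SemiBlowup}$.

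\emph{Main obstacle.} The colour propagation and the fact that all contradictions are purely local (independent of the colours on the third edge leaving each $v_i$, which is why deleting those edges never helps) are routine. The delicate point is the covering estimate: I must choose the obstruction sets $S_i$ so that each block-internal edge hits at most two of them and each connecting edge hits only one. A careless choice—one in which a single deleted edge could disable several junctions simultaneously—would destroy the factor $2$ and hence the $\lceil k/2\rceil$ lower bound, so verifying the incidences between edges and the sets $S_i$ is where the care is needed.
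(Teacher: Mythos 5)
Your proof is correct and follows essentially the same route as the paper: your local colour-propagation contradictions are exactly the paper's non-3-edge-colorable 5-poles $H_1$ and $H_2$ (built from Lemma~\ref{samecol} plus the parity argument), and your double counting over the obstruction sets $S_i$ is precisely the content of the paper's claim that destroying all copies of $H_1$ (resp.\ $H_2$) requires at least $\sum_{i}\beta(D_i)=\sum_{i}\lceil |V(D_i)|/2\rceil$ edge deletions. If anything, your write-up makes explicit two points the paper leaves terse: the incidence bookkeeping behind its ``easy to see'' covering bound, and the fact that the local obstruction persists in the subcubic graph $G-M$ because deleting edges that correspond to semiedges of the gadget never helps.
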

	\begin{proof}
		Let $G_1:=\textrm{SemiBlowup}(G,D)$ and $G_2:=\textrm{Blowup}(G,D)$. Every pair of adjacent edges on a cycle in $D$ gives a subgraph isomorphic to $H_1$ in $G_1$ and a subgraph isomorphic to $H_2$ in SemiBlowup$(G,D)$. Since a cubic graph cannot be colorable if either $H_1$ or $H_2$ are subgraphs, we have that $r_3(G_1)$ and $r_3(G_2)$ is at least the number of edges that needs to be removed in order to make $G_1$ $H_1$-free or $G_2$ $H_2$-free. It is easy to see that this is at least $\sum_{i=1}^l\beta(D_i)$ where $\beta$ is the vertex cover number of a graph (the minimum number of vertices needed to cover all the edges in a graph).
		Since an optimal vertex cover of an even cycle is to choose every second vertex we have that $\beta(C_{2k})=k$ and for odd cycles at least two adjacent vertices must be in the vertex cover, so $\beta(C_{2k+1})= k+1$. Hence \[o(G_j) \geq r_3(G_j)\geq \sum_{i=1}^l\beta(D_i)=\sum_{i=1}^l \Big\lceil\frac{|V(D_i)|}{2}\Big\rceil\]
		for $j=1,2$. 
	\end{proof}
	If $G$ has no cyclic $k$-edge-cuts where $k\leq 3$, then both blowup and semiblowup will produce snarks. This also gives a simple construction of cyclically 4-edge connected snarks with rather high oddness compared to the number of vertices. 
	
	By considering the semiblowup of a hamiltonian cycle in a hamiltonian cubic graph, we get the following: 
	\begin{corollary} \label{cor:bigo1}
		For every $k\in\mathbb N, k>2$ there exists a snark $G_k$ with $|V(G_k)|=18k$ and $o(G_k)\geq k$. 
	\end{corollary}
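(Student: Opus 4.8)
The plan is to realize $G_k$ as the semiblowup of the Hamilton cycle of a conveniently chosen base graph and then read off both parameters from Theorem~\ref{thm:blowup} together with the remark immediately following it. For the base graph I would take the M\"obius ladder $G=M_{2k}$, i.e.\ the cubic graph on vertices $\{0,1,\dots,2k-1\}$ consisting of the rim cycle $C=(0,1,\dots,2k-1)$ together with the $k$ diameters $\{i,i+k\}$. Here the rim $C$ is already a Hamilton cycle, so I may set $D=C$ and form $G_k:=\mathrm{SemiBlowup}(M_{2k},C)$. Granting that $M_{2k}$ is cyclically $4$-edge-connected (see below), the remark after Theorem~\ref{thm:blowup} shows that $G_k$ is a snark, while the oddness bound of the same theorem gives $o(G_k)\ge \lceil |V(C)|/2\rceil=\lceil 2k/2\rceil=k$.

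It then remains to count vertices. The construction keeps all $2k$ vertices of $M_{2k}$ and inserts one copy of the eight-vertex $4$-pole $B$ for each of the $2k$ vertices of $C$, so $|V(G_k)|=2k+8\cdot 2k=18k$, as required; note that $2k\ge 6$, i.e.\ the girth bound needed below holds, exactly when $k>2$. Thus all that stands between us and the corollary is the cyclic connectivity of the base graph.

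The step that needs genuine verification — and the main obstacle — is that $M_{2k}$ has no cyclic edge cut of size at most $3$. The idea is a parity argument on the rim. Let $(A,B)$ be a vertex bipartition realizing a cyclic cut; then both $A$ and $B$ contain a cycle, and since the girth is $4$ for $k>2$ each side has at least four vertices, so $4\le |A|\le 2k-4$, and after possibly swapping $A$ and $B$ we may assume $|A|=\ell\le k$. Because $C$ is a spanning cycle it crosses the cut an even and positive number of times, so if the whole cut has size at most $3$ then exactly two rim edges cross and $A$ is a single arc of $C$. But an arc $A$ of length $\ell\le k$ is disjoint from its antipodal arc $A+k$, so every one of the $\ell\ge 4$ diameters incident to $A$ has its other endpoint in $B$ and crosses the cut; this already forces more than three crossing edges, a contradiction. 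Hence no cyclic cut of size at most $3$ exists and $M_{2k}$ is cyclically $4$-edge-connected for every $k>2$, which completes the proof. The only place where care is required is keeping track of the two arc-length regimes ($\ell\le k$ versus $\ell\ge k$), but these are interchanged by swapping $A$ and $B$, so a single \emph{without loss of generality} disposes of both.
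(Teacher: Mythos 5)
Your proposal is correct and takes essentially the same route as the paper: the paper obtains this corollary precisely by taking the semiblowup of a Hamilton cycle in a cyclically 4-edge connected hamiltonian cubic graph on $2k$ vertices and invoking Theorem \ref{thm:blowup} together with the remark preceding the corollary, which is exactly your argument. Your only addition is to make the base graph explicit (the M\"obius ladder $M_{2k}$) and to verify its cyclic 4-edge connectivity via the arc-plus-diameters counting argument --- a detail the paper leaves implicit, and which you carry out correctly.
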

	This can be improved slightly by instead use many short cycles of odd lengths.
	\begin{corollary}\label{cor:bigo}
		For every $k\in\mathbb N$ there exists a snark $G_k$ with $|V(G_k)|=90k$ and $o(G_k)\geq 6k$. 
	\end{corollary}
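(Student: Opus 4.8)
The plan is to apply Theorem \ref{thm:blowup} to a carefully chosen base graph whose $2$-factor is a disjoint union of pentagons, since odd cycles give the most favorable ratio between the oddness bound $\lceil |V(D_i)|/2\rceil$ and the number of vertices created by the blowup. Concretely, for each $k$ I would look for a cyclically $4$-edge-connected cubic graph $G_k'$ on $10k$ vertices that admits a $2$-factor $D$ consisting of $2k$ disjoint $5$-cycles, and then set $G_k := \mathrm{SemiBlowup}(G_k', D)$.

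Granting such a base graph, the two numerical claims follow at once. For the vertex count, the semiblowup keeps the $10k$ original vertices and inserts one copy of the $8$-vertex $4$-pole $B$ for each vertex lying on a cycle of $D$; since $D$ is a $2$-factor it covers all $10k$ vertices, so we add $8\cdot 10k = 80k$ vertices and obtain $|V(G_k)| = 90k$. For the oddness, Theorem \ref{thm:blowup} gives
\[
  o(G_k) \;\geq\; \sum_{i=1}^{2k} \Big\lceil \frac{|V(D_i)|}{2} \Big\rceil \;=\; \sum_{i=1}^{2k} \Big\lceil \frac{5}{2} \Big\rceil \;=\; 2k\cdot 3 \;=\; 6k,
\]
and the remark following Theorem \ref{thm:blowup}, applied to the cyclically $4$-edge-connected $G_k'$, guarantees that $G_k$ is a snark. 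As a sanity check this yields $o/|V| \ge 1/15$, a genuine improvement over the $1/18$ of Corollary \ref{cor:bigo1}, matching the claim that short odd cycles do slightly better.

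It remains to produce the base family. For $k=1$ the Petersen graph itself works: its standard $2$-factor is the outer pentagon together with the inner pentagram (a $5$-cycle), and it is cyclically $5$-edge-connected. For general $k$ I would chain $k$ copies $P^1,\dots,P^k$ of the Petersen graph into a ring: in each copy delete two spokes (two edges of the perfect matching complementary to the two pentagons), freeing two outer and two inner half-edges, and reconnect the two freed inner half-edges of $P^j$ to the two freed outer half-edges of $P^{j+1}$ cyclically. Because only matching (spoke) edges are touched, the $2k$ pentagons survive and still form a $2$-factor, while the vertex count stays $10k$.

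The main obstacle is verifying that this ring is cyclically $4$-edge-connected, i.e.\ that the reconnection introduces no cyclic cut of size $\le 3$. The delicate point is that each copy must be attached to its neighbours by four edges (two on each side) rather than one, which is exactly why \emph{two} spokes are removed per copy: isolating a single copy then costs four edges, and severing the ring at two places also costs at least four, so no small cyclic cut can separate two cycle-containing blocks. One still has to rule out small cuts living inside a modified Petersen copy, for which it suffices that the Petersen graph minus a $2$-edge matching remains sufficiently connected. If this case analysis becomes fussy, an alternative is to describe $G_k'$ abstractly as the pentagon-expansion of a $5$-regular, $5$-edge-connected (multi)graph on $2k$ vertices, so that the cyclic edge-connectivity of the expansion can be read off from the edge-connectivity of the quotient; I expect the ring-of-Petersens description to be the cleaner one to write down explicitly.
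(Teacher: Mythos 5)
Your proof is correct and follows essentially the same route as the paper: apply $\mathrm{SemiBlowup}$ to a cubic graph on $10k$ vertices whose 2-factor consists of $2k$ pentagons, so that Theorem \ref{thm:blowup} with $\lceil 5/2\rceil=3$ gives $o(G_k)\geq 6k$ and the vertex count is $10k+8\cdot 10k=90k$. The only difference is that the paper simply says to add edges between the $2k$ copies of $C_5$ to obtain a cubic base graph, whereas you build an explicit cyclically 4-edge-connected base family (the ring of Petersen graphs); this is extra care on a point the paper glosses over, namely that the base graph can be chosen so that the construction actually yields a snark.
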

	\begin{proof}
		Given $k\in \mathbb N$, let $F_k$ be the 2-factor formed from $2k$ copies of $C_5$. Then add $10k$ edges between the cycles in $F$ in order to obtain a cubic graph $G_k'$. Now let $G_k:=\textrm{SemiBlowup}(G',F)$. Since $\beta(C_5) = 3$ we get $o(G_k)\geq 2k\beta(C_5)=6k$.  
	\end{proof}
	In Corollary \ref{cor:bigo} the oddness of $G_k$ grows linearly in the number of vertices. Obviously it is impossible to construct a family of snarks where the oddness grows superlinearly. Let \[q_k:=\max\Big\{\frac{k}{|V(G)|}:G\in\mathcal G\textrm{ and }o(G)\geq k\Big\}\] 
	where $\mathcal G$ is the family of snarks. 
	The Petersen graph is the smallest snark so $q_2=\frac{1}{5}$. It is known that the smallest snark with oddness 4 has at least 38 vertices \cite{hagglund10} and from Theorem \ref{thm:blowup} we can construct a snark with oddness 4 on 46 vertices (use the semiblowup construction on the cubic graph on 6 vertices which consists of a 5-cycle, a cord and a $K_{1,3}$). Hence, $\frac{1}{12}\leq q_4\leq \frac{2}{18}$. From Corollary \ref{cor:bigo} we get $q_{6k}\geq\frac{1}{15}$ and from Corollary \ref{cor:bigo1} we get $q_k\geq\frac{1}{18}$. We pose the following two problems: 
	\begin{problem}
		Let $k$ be a given even number. Determine $q_k$ when $k>2$.  
	\end{problem}
	\begin{problem}
		What is the largest value $c$ such that $q_k\geq c$ for all even $k$.
	\end{problem}

\section{Perfect matching covers} 
	Let $C$ be a cycle of length 3 in $K_4$ and consider the graph Blowup$(K_4,C)$. 
	By using a computer, it is easy to see that this graph has perfect matching index 5. Note that this graph was also observed in \cite{hagglund10}, as an example of a snark of minimum order for which the removal of any vertex yields a graph homeomorphic to a non 3-edge colorable cubic graph. 
	\begin{observation}\label{obs:tau5}
		$\tau(Blowup(K_4,C))=5$. 
	\end{observation}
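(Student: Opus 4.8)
\emph{Proof proposal.} Write $G:=\mathrm{Blowup}(K_4,C)$. The plan is to establish the two inequalities $\tau(G)\le 5$ and $\tau(G)\ge 5$ separately. The bound $\tau(G)\ge 4$ is immediate: if a cubic graph is covered by three perfect matchings then, since $|E(G)|=\tfrac32|V(G)|$ equals the combined size of three perfect matchings, those matchings must cover $E(G)$ without overlap, hence partition it, which is exactly a proper $3$-edge-coloring. As $G$ is uncolorable by Theorem \ref{thm:blowup}, at least four matchings are needed. So the real content is to rule out a cover by exactly four perfect matchings and to produce one by five.

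For the upper bound $\tau(G)\le 5$ I would simply exhibit five perfect matchings of $G$ whose union is $E(G)$. Here I would exploit the threefold rotational symmetry coming from the cycle $C=(v_1,v_2,v_3)$ of $K_4$: fix a perfect matching inside one copy $B_i$ together with a compatible choice of the junction edges at $v_i,u_i,w_i$, rotate it to obtain a symmetric family, and adjoin one or two further matchings to pick up the remaining edges. Checking that five well-chosen matchings cover every edge is a finite verification, so this direction is routine.

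The crux is the lower bound $\tau(G)\ge 5$, that is, ruling out a cover $\{M_1,M_2,M_3,M_4\}$ of $E(G)$ by four perfect matchings. The key structural observation is a pigeonhole argument at each vertex: since each $M_j$ meets a vertex $v$ in exactly one edge, the three edges at $v$ carry multiplicities summing to $4$, so, if every edge is covered, these multiplicities are $\{1,1,2\}$. Hence the set $D$ of edges lying in exactly two of the $M_j$ is a perfect matching, $F:=G-D$ is a $2$-factor whose edges are each covered once, and the entire cover is encoded by assigning to every $D$-edge $e$ the pair $P(e)\subset\{1,2,3,4\}$ of matchings containing it, subject to the local rule that at each vertex the two incident $F$-edges receive, one each, the two ``colors'' of $\{1,2,3,4\}\setminus P(e)$. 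This reduces a $4$-cover to a constraint-propagation problem on $F$.

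The main obstacle is then to show that this local system has no global solution, and this is where the gadget $B$ should do the work, in analogy with Lemma \ref{samecol}. I would first determine, for the $4$-pole $B$ (the Petersen graph minus two adjacent vertices), exactly which even-size subsets of its four semiedges $\{a_1,a_2,b_1,b_2\}$ extend to a perfect matching of $B$, together with the finer pair-data compatible with the rule above; the expectation, mirroring the coloring obstruction, is that the three copies $B_1,B_2,B_3$ force incompatible labels around the blown-up triangle. Closing this case analysis by hand is delicate because of the number of admissible local configurations, and it is exactly this exhaustive step that the author discharges ``by computer''; a human proof would organize it as a transfer argument around the three junctions, using the symmetry to cut down the cases, and conclude that no consistent assignment of the pairs $P(e)$ exists, whence $\tau(G)\ge 5$.
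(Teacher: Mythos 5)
Your framework is sound as far as it goes: the reduction of a hypothetical four-matching cover to a perfect matching $D$ of doubly covered edges together with a pair-labelling $P(e)\subset\{1,2,3,4\}$ propagated along the complementary 2-factor is correct, and your argument that $\tau(G)\ge 4$ follows from uncolorability (Theorem \ref{thm:blowup} applies, since $K_4$ is 3-edge connected) is fine. But the proposal stops exactly where the proof has to happen, and neither inequality is actually established. For $\tau(G)\le 5$ you never exhibit the five perfect matchings, and no general theorem supplies them: the bound $\tau\le 5$ for all bridgeless cubic graphs is precisely the open Berge--Fulkerson statement, so for this concrete graph a cover must be displayed or verified explicitly. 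For $\tau(G)\ge 5$ you only state the ``expectation'' that the three copies of $B$ force incompatible labels around the blown-up triangle. The analogue of Lemma \ref{samecol} that your argument needs --- a determination of which subsets of $\{a_1,a_2,b_1,b_2\}$ extend to perfect matchings of $B$, refined by the admissible pair-data --- is never computed, and its outcome cannot be asserted by analogy with the coloring case: $B$ is itself 3-edge colorable (only the semiedge colors are constrained), so matching-cover obstructions inside $B$ are genuinely weaker than uncolorability, and it is not obvious without doing the computation that the constraint system is infeasible. As written, the crux of the statement is deferred, not proved.

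For comparison, the paper does not give a human proof at all: the statement is labelled an Observation precisely because it is discharged by an exhaustive computer check of the perfect matching covers of this one 34-vertex graph. So your proposal, had the two finite verifications been carried out (ideally exploiting the rotational symmetry of $\mathrm{Blowup}(K_4,C)$ to cut the case count), would be a genuine improvement on the paper --- indeed the concluding remarks lament the lack of such a computer-free argument --- but in its present form it is a plan for a proof with the decisive steps in both directions left open.
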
 
	\begin{figure}[h!t]	
		\includegraphics[scale=0.46]{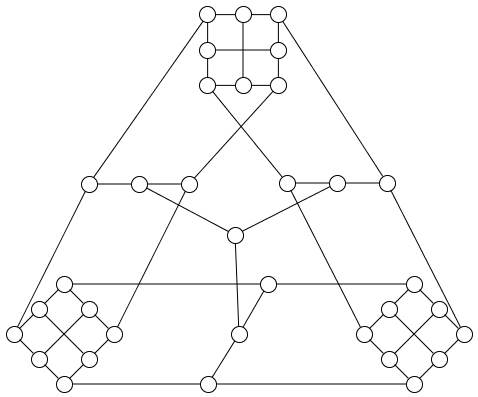}
		\caption{An illustration of Blowup$(K_4,C)$.}
		\label{blowup-fig}
	\end{figure}
	
	It is possible to construct other snarks with this property. Another example is the blowup of a 4-cycle in the prism depicted in Figure \ref{buprism}. However we do not have any good characterization of the cubic graphs and 2-regular subgraphs for which the blowup-construction yields snarks with perfect matching index 5. 
	
	\begin{figure}[h!t]	
		\includegraphics[scale=0.46]{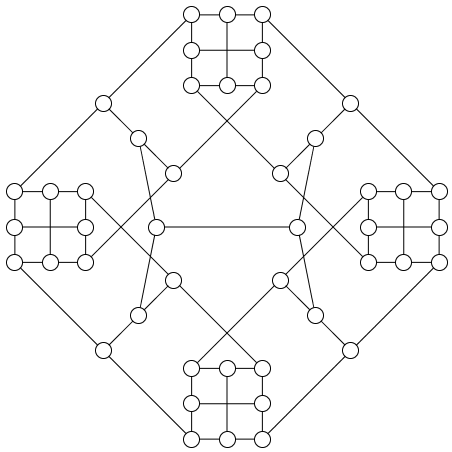}
		\caption{An illustration of Blowup$(Prism,C_4)$.}
		\label{buprism}
	\end{figure}
	
	\begin{problem}
		Is it possible to give a simple characterization of cubic graphs $G$ with $\tau(G)=5$?
	\end{problem}
	Graphs constructed with the blowup-construction always contains cyclic 4-edge cuts, and other than the Petersen graph, we do not know of any cyclically 5-edge connected snarks $G$ with $\tau(G)=5$. 
	\begin{problem}
		Are there any cyclically 5-edge connected snarks $G$, different from the Petersen graph, such that $\tau(G)=5$?
	\end{problem}
	
\section{Cycle double covers} 
	We say that a \emph{cycle double cover} (abbreviated CDC) of a graph is multiset of cycles such that every edge is covered by exactly two cycles. A \emph{k-CDC} is a CDC where we can color assign $k$ colors to the cycles in such a way that if two cycles share an edge, they will always receive different colors. A famous conjecture by Seymour \cite{MR538060} and Szekeres \cite{MR0325438} asserts that every bridgeless cubic graph has a CDC, and a conjecture by Celmis \cite{MR2634187} strengthen this further by asserting that every such graph in fact has a 5-CDC. 
	Celmins also observed a connection between the perfect matching index and the existence of a 5-CDC where one color class induces a 2-factor. 
	\begin{theorem}[Celmins \cite{MR2634187}]\label{thm:celmins}
		Let $G$ be a cubic graph. Then $\tau(G)\leq 4$ if and only if $G$ has a 5-CDC where one color class induces a 2-factor in $G$. 
	\end{theorem}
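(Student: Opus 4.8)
The plan is to reduce the statement to a clean correspondence between covers by four perfect matchings and double covers by five even subgraphs, and then to exhibit that correspondence explicitly. First I would reformulate the two sides in the same language. A proper $5$-colouring of the cycles of a CDC partitions those cycles into five colour classes; since two cycles of the same colour never share an edge, each class is a union of edge-disjoint cycles, i.e.\ an even subgraph, and every edge lies in exactly two classes. Conversely, decomposing each part of a double cover by five even subgraphs into cycles and colouring all cycles of one part with one colour recovers a $5$-CDC. Thus the statement is equivalent to: $\tau(G)\le 4$ if and only if $E(G)$ has a double cover by five even subgraphs $C_1,\dots,C_5$ one of which is a spanning $2$-factor. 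I would prove both implications through the map $X\mapsto P\,\triangle\, X$, where $P$ is the perfect matching complementary to the $2$-factor class and $\triangle$ denotes symmetric difference; this map turns the four matchings into the four non-$2$-factor classes and back.

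For the forward implication, suppose $M_1,\dots,M_4$ are perfect matchings covering $E(G)$ (padding with a repeated matching if $\tau(G)<4$). The key local observation is that at each vertex the four matchings contribute four edge-incidences to three edges, each of which is covered at least once, so the multiplicities are forced to be $(2,1,1)$. Hence the doubly covered edges form a perfect matching $M^\ast$ and the singly covered edges form a $2$-factor $F$. I would then set $C_i:=M^\ast\,\triangle\,M_i$ for $i=1,\dots,4$ and $C_5:=F$. Each $C_i$ is an even subgraph, being the symmetric difference of two perfect matchings, and a short count shows that an edge of $M^\ast$ lies in exactly two of $C_1,\dots,C_4$ and in no other class, while an edge of $F$ lies in exactly one of $C_1,\dots,C_4$ and in $C_5$; so $C_1,\dots,C_5$ is the required double cover with $C_5$ a $2$-factor.

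For the converse, suppose $C_1,\dots,C_5$ is a double cover by even subgraphs with $C_5$ a $2$-factor, and let $P:=E(G)\setminus C_5$ be the complementary perfect matching (so $P$ plays the role of $M^\ast$). Here I would again argue locally: at a vertex $v$ the three incident edges (one edge $p$ of $P$ and the two edges $f_1,f_2$ of $C_5$) receive six class-incidences, so exactly three classes are active at $v$, and since each edge must be covered twice these three classes realise precisely the three pairs $\{p,f_1\}$, $\{p,f_2\}$, $\{f_1,f_2\}$. As $C_5$ is the class using $\{f_1,f_2\}$, the two active classes among $C_1,\dots,C_4$ both contain $p$. Consequently $M_i:=P\,\triangle\,C_i$ has degree exactly $1$ at every vertex and is therefore a perfect matching, and the same coverage count as before shows $M_1,\dots,M_4$ cover $E(G)$, whence $\tau(G)\le 4$.

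The main obstacle in both directions is precisely this per-vertex analysis: one must show that the coverage pattern is forced so that the $2$-factor class always absorbs the two non-matching edges at each vertex, leaving the matching edge to be shared exactly by the active classes; this is what guarantees that the symmetric differences $M^\ast\,\triangle\,M_i$ are even and that $P\,\triangle\,C_i$ is $1$-regular. The rest is bookkeeping. I would also take a moment to check the degenerate cases (an empty colour class, or $\tau(G)=3$ handled by repeating a matching), which do not affect the argument but should be mentioned so that the correspondence is genuinely an equivalence.
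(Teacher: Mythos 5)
Your proposal is correct, but there is nothing in the paper to compare it against: the paper states this result as Theorem~\ref{thm:celmins} with attribution to Celmins and gives no proof at all, using it only as a black box to derive the corollary about Blowup$(K_4,C)$. So you have in effect supplied the missing proof, and it is the standard one. Both directions check out: in the forward direction, the counting at each vertex (four matching-incidences on three edges, each covered at least once) does force the multiplicity pattern $(2,1,1)$, so the doubly covered edges form a perfect matching $M^\ast$ and the singly covered edges a 2-factor $F$; the symmetric differences $M^\ast\,\triangle\,M_i$ are even subgraphs, and your coverage count (an edge of $M^\ast$ lies in the two classes $C_i$ with $e\notin M_i$; an edge of $F$ lies in the one class with $e\in M_i$ plus $F$ itself) is exactly right. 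In the converse, the parity argument at each vertex (each class meets $\delta(v)$ in $0$ or $2$ edges, six incidences total, so exactly three active classes realising the three distinct pairs) correctly forces both active classes among $C_1,\dots,C_4$ to contain the matching edge $p$, which is what makes $P\,\triangle\,C_i$ $1$-regular; the final coverage count is also correct. Your attention to the degenerate cases (repeated matchings when $\tau(G)=3$, empty colour classes) is appropriate, since a $5$-CDC need not use all five colours. The only stylistic remark is that the reformulation of a $5$-CDC as a double cover by five even subgraphs, which you treat carefully at the outset, is exactly the right move and is where the definition in the paper (``color classes'' of cycles) connects to the matching arithmetic.
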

	From Theorem \ref{thm:celmins} and Observation \ref{obs:tau5} we see that Blowup$(K_4,C)$, where $C$ is a 3-cycle in $K_4$, also has the following interesting property. 
	\begin{corollary}
		No 2-factor in Blowup$(K_4,C)$ can be part of a 5-CDC. 
	\end{corollary}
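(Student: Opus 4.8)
The plan is to obtain the statement directly from Theorem~\ref{thm:celmins} together with Observation~\ref{obs:tau5}, so that essentially no new work is required beyond fixing the meaning of the phrase. First I would pin down what it means for a 2-factor to be \emph{part of} a 5-CDC: in a proper $5$-coloring of the cycles of a CDC, each color class is a family of pairwise edge-disjoint cycles, and I read a 2-factor $F$ as being part of the $5$-CDC precisely when the cycles of some single color class together span all vertices, i.e.\ when that color class induces $F$. This is exactly the hypothesis appearing in Theorem~\ref{thm:celmins}, so the identification lets me apply that theorem verbatim.

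With this reading the argument is a single contrapositive step. By Observation~\ref{obs:tau5} we have $\tau(\mathrm{Blowup}(K_4,C)) = 5$, so in particular $\tau(\mathrm{Blowup}(K_4,C)) > 4$. The forward direction of Theorem~\ref{thm:celmins} asserts that whenever a cubic graph admits a $5$-CDC in which one color class induces a 2-factor, then $\tau \leq 4$. Since $\tau(\mathrm{Blowup}(K_4,C)) \not\leq 4$, the contrapositive gives that $\mathrm{Blowup}(K_4,C)$ admits no $5$-CDC in which a color class induces a 2-factor; equivalently, no 2-factor can occur as a color class of a $5$-CDC, which is the claim.

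The main point to be careful about is definitional rather than mathematical: I must make sure that ``part of a $5$-CDC'' is genuinely the notion ``one color class induces a 2-factor'' used by Celmins, and not a weaker notion in which the cycles of the 2-factor are spread across several color classes. I would therefore insert one explicit sentence making this identification, after which the invocation of Theorem~\ref{thm:celmins} is unambiguous. No computation is needed beyond the value $\tau = 5$ already recorded in Observation~\ref{obs:tau5}.
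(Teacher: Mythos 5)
Your proof is correct and is exactly the paper's argument: the corollary is derived there directly from Theorem~\ref{thm:celmins} together with Observation~\ref{obs:tau5}, reading ``part of a 5-CDC'' as ``forms one color class,'' precisely as you do. Your extra sentence pinning down that definitional reading is a sensible clarification but not a departure from the paper's approach.
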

	
	         By a computer search we also make the following stronger observation.
	\begin{observation}
		No 2-factor in Blowup$(K_4,C)$ can be part of a CDC. 
	\end{observation}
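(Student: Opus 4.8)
The plan is to reduce the statement to a finite, mechanically checkable problem by first pinning down exactly how a fixed 2-factor constrains the rest of a cycle double cover. Write $G := \mathrm{Blowup}(K_4,C)$ and suppose, toward a contradiction, that the components of some 2-factor $F$ all occur as cycles of a CDC $\mathcal C$; set $M := E(G)\setminus F$, the complementary perfect matching. The starting point is the standard local count: in any double cover of a cubic graph by cycles, the three edges at a vertex $v$ are split by the cycles through $v$ into the three unordered pairs, each occurring exactly once. Indeed, each edge lies in two cycles, each cycle meeting $v$ uses two of the three edges, so the three \emph{transitions} at $v$ must be the three distinct pairs.

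At a vertex $v$ with $F$-edges $f_1,f_2$ and matching edge $m$, the cycle of $F$ through $v$ already realizes the transition $\{f_1,f_2\}$, so the remaining cycles $\mathcal C':=\mathcal C\setminus F$ must realize exactly the two transitions $\{f_1,m\}$ and $\{f_2,m\}$. Thus $\mathcal C'$ is a family of cycles covering each edge of $M$ twice and each edge of $F$ once, in which at every vertex the matching edge is paired with each of the two 2-factor edges. Concretely, one may picture each matching edge $m=uv$ as two parallel strands; the transition data forces each strand to join an $F$-edge at $u$ to an $F$-edge at $v$, leaving a single binary choice per matching edge (whether the two strands \emph{cross}). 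Following the strands then produces closed walks that alternate $F$-edge, $M$-edge, $F$-edge, $M$-edge, and $F$ is part of a CDC if and only if, for some choice of crossings, every one of these walks is a simple cycle --- equivalently, no walk traverses the same vertex twice.

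The claim to be proved therefore becomes: for every 2-factor $F$ of $G$ and every assignment of crossings at the matching edges, some resulting closed walk visits a vertex twice. First I would exploit the symmetry of the construction --- the dihedral action permuting the three gadgets $B_1,B_2,B_3$ together with the internal automorphism of the 4-pole $B$ --- to reduce to a bounded list of 2-factors up to isomorphism. For each representative I would compute the forced transition pairs and then examine the crossing choices, searching for a vertex whose two transitions are inevitably swept into a common walk. The natural place to localize the obstruction is inside a single copy $B_i$ and at the junction vertices $u_i,w_i,v_i$, since Lemma \ref{samecol} already shows that the Petersen fragment $B$ is extremely rigid under 3-edge-colorings, and an analogous rigidity should constrain how the strands of $\mathcal C'$ may pass through it.

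I expect this case analysis to be the main obstacle. Even after symmetry reduction $G$ has $34$ vertices and each 2-factor admits exponentially many crossing assignments, so checking that none of them yields an all-simple-cycle family is precisely the bookkeeping the authors delegated to a computer. A genuinely human proof would have to show that the obstruction lives inside one gadget $B_i$ and is insensitive to the global choice of $F$ and of crossings; isolating that local certificate is the delicate step. Finally, I would stress that one cannot shortcut this via Theorem \ref{thm:celmins}: together with Observation \ref{obs:tau5} that result only excludes $F$ from a \emph{5}-CDC, whereas the transition argument above never refers to the number of colors and is exactly what upgrades the conclusion to \emph{all} cycle double covers.
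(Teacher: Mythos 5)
The paper offers no human argument for this observation either---it is stated purely as the outcome of a computer search---so your proposal, which correctly reduces the statement to a finite check (the transition system at each cubic vertex forces the non-$F$ cycles to alternate between $F$-edges and matching edges, leaving one crossing choice per matching edge) and then delegates the exhaustive verification over all 2-factors and crossing assignments to a machine, is essentially the same approach; your reduction is a sound formalization of exactly what such a search must test. Your closing remark is also correct and worth keeping: Theorem \ref{thm:celmins} combined with Observation \ref{obs:tau5} only excludes 5-CDCs, which is precisely why the paper presents this computer-checked statement as a separate, \emph{stronger} observation.
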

	
	\subsection{Circumference}
	It is known that cubic graphs with oddness 4 or less have cycle double covers \cite{MR2117938,MR1815603}. It is also known that the cycle double cover conjecture holds for graphs with sufficiently long cycles. Given a graph $G$, let the \emph{circumference} of $G$ be the length of the longest cycle. We denote this by $\textrm{circ}(G)$. In \cite{hagglund10:2} it is shown that if a cubic graph $G$ has $\textrm{circ}(G)\geq |V(G)|-9$, then $G$ has a cycle double cover. This result was further improved in \cite{hagglund10} to $|V(G)|-10$. We have already seen that the snarks constructed in Section \ref{section:construct} can have arbitrary high oddness. We will now show that the same constructions also produces snarks with low circumference compared to the number of vertices.  
	
	Given a cubic graph $G$, let $\rho(G)$ denote the minimum number of vertices that needs to be removed from $G$ in order to obtain a 3-edge colorable graph. It was shown in \cite{MR1630478} that this number equals the resistance of the graph.
	\begin{theorem}[Steffen \cite{MR1630478}] \label{thm:steffen}
		Let $G$ be a cubic graph. Then $\rho(G)=r_3(G)$. 
	\end{theorem}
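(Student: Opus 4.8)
The plan is to prove the two inequalities $\rho(G)\le r_3(G)$ and $r_3(G)\le\rho(G)$ separately, the first being essentially immediate and the second requiring an inductive recoloring argument.

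For $\rho(G)\le r_3(G)$ I would argue directly. Let $M\subseteq E(G)$ be a minimum set of edges with $\chi'(G-M)=3$, so $|M|=r_3(G)$, and form a vertex set $W$ by selecting one endpoint of each edge of $M$; then $|W|\le|M|$. Since every edge of $M$ is incident with a vertex of $W$, the induced subgraph $G-W$ contains no edge of $M$, so $G-W$ is a subgraph of the colorable graph $G-M$. Restricting a proper $3$-edge-coloring of $G-M$ to $G-W$ gives $\chi'(G-W)=3$, whence $\rho(G)\le|W|\le r_3(G)$.

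The substantive direction is $r_3(G)\le\rho(G)$. Let $W=\{w_1,\dots,w_k\}$ with $k=\rho(G)$ and $\chi'(G-W)=3$, and fix such a coloring. I would reintroduce the deleted vertices one at a time, writing $G_0=G-W$ and letting $G_i$ be $G_{i-1}$ together with $w_i$ and those edges from $w_i$ to vertices already present, so that $G_k=G$. The claim to maintain inductively is that $G_i-M_i$ admits a proper $3$-edge-coloring for some edge set $M_i$ with $|M_i|\le i$; the heart of the matter is to show that adding back a single vertex forces at most one additional edge deletion. When $w_i$ is added, each present neighbor $x$ has degree at most $2$ in the current colored graph (its edge to $w_i$, and any edges to later $w_j$, are still absent), hence at least one free color. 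If $w_i$ has at most two present neighbors, or if the lists of free colors at its neighbors admit a system of distinct representatives, the new edges can be colored with no deletion; whenever Hall's condition fails, deleting one edge at $w_i$ and coloring the rest costs a single deletion.

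The hard part will be the one genuinely resistant configuration: $w_i$ has three present neighbors $x,y,z$, each of degree exactly $2$, all missing the \emph{same} color, say $1$. Here no two edges at $w_i$ can receive distinct admissible colors without recoloring. The key step is a Kempe-chain interchange: examining the $\{1,2\}$- and $\{1,3\}$-chains emanating from $x,y,z$, I would show that a suitable color swap makes two of the three missing colors differ, after which two edges at $w_i$ can be colored and the third deleted. Proving that such a chain is always available — that $x,y,z$ cannot be simultaneously locked together by all the relevant chains — is the crux, and follows by a short case analysis using the fact that each vertex lies on a unique maximal chain for each color pair. Summing the at-most-one deletion over the $k$ additions yields an edge set of size at most $k$ whose removal leaves $G$ colorable, so $r_3(G)\le k=\rho(G)$; together with the first inequality this gives $\rho(G)=r_3(G)$.
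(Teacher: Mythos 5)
The paper itself contains no proof of this theorem to compare yours against: it is imported from Steffen \cite{MR1630478} and used as a black box (in the proof of Lemma \ref{lemma:circ}). So your proposal must be judged on its own merits, and on those merits it is correct. The direction $\rho(G)\le r_3(G)$ is exactly the standard argument: one endpoint per edge of a minimum set $M$ gives a vertex set $W$ with $|W|\le |M|$ meeting every edge of $M$, so $G-W$ is a subgraph of $G-M$ and inherits its coloring. For $r_3(G)\le\rho(G)$, your reinsertion scheme with the invariant ``at most one deletion per reinserted vertex'' is sound, and your diagnosis of the only troublesome configuration is accurate: when $w_i$ has three present neighbours, Hall's condition for the free-colour lists can fail in a way that a single deletion cannot repair \emph{only} if all three lists are the same singleton. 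In every other failure mode one can delete one edge so that the two surviving lists are not equal singletons, and two nonempty lists that are not equal singletons always admit a system of distinct representatives.

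Moreover, the step you flag as the crux is in fact immediate rather than delicate. If $x,y,z$ all have degree $2$ and all miss colour $1$, then each has degree $1$ in the subgraph of edges coloured $1$ or $2$, so each is an endpoint of a maximal $\{1,2\}$-alternating path. Take the path $P$ ending at $x$: it has exactly one other endpoint, so swapping colours along $P$ turns the missing colour at $x$ into $2$ while changing the missing colour of at most one of $y,z$. After the swap the three missing colours are no longer all equal, so by the observation above one deletion suffices; summing over the $k$ reinsertions gives $r_3(G)\le k=\rho(G)$, as you say. One slip worth correcting in your write-up: with exactly two present neighbours that both miss only colour $1$, you cannot colour the two new edges ``with no deletion'' as claimed --- both would need colour $1$ at the common vertex $w_i$ --- you need either the same Kempe swap or one deletion. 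Since one deletion always suffices in that case, your invariant, and hence the theorem, is unaffected, but the sentence as written overstates what the lists give you.
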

	This theorem was later generalized in \cite{Kochol:2011vn} to apply to all graphs with vertices of degee at most 3. A simple consequence of this theorem is a bound on the circumference. 
	\begin{lemma}\label{lemma:circ}
		Let $G$ be a cubic graph. Then circ$(G)\leq |V(G)|-r_3(G)+1$. 
	\end{lemma}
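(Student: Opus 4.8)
The plan is to pass to the vertex-deletion version of resistance supplied by Theorem~\ref{thm:steffen}: since $r_3(G)=\rho(G)$, it suffices to exhibit a set $S\subseteq V(G)$ with $|S|\le |V(G)|-\mathrm{circ}(G)+1$ such that $G-S$ is $3$-edge-colorable. I would extract such a set directly from a longest cycle, so that the bound falls out of a vertex count.

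Let $C$ be a cycle of length $\ell=\mathrm{circ}(G)$, and take $S$ to consist of all $|V(G)|-\ell$ vertices off $C$ together with one arbitrarily chosen vertex $v$ of $C$; then $|S|=|V(G)|-\ell+1$, exactly the bound we need. The key structural observation is that the subgraph $G[V(C)]$ induced on the cycle is $C$ itself together with a set of chords that form a \emph{matching}: each vertex of $C$ already uses two of its three edges on $C$, hence is incident to at most one chord. Deleting $v$ turns $C$ into a Hamiltonian path $P$ of $G-S$, and the chords that survive still form a matching $M$ on $V(P)$, so $G-S=P\cup M$.

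It then remains to $3$-edge-color $P\cup M$. I would first $2$-edge-color the path $P$ by alternating two colors along it; this is automatic for a path, and it leaves the third color free at every vertex of $P$ (an internal vertex meets one edge of each path color, while an endpoint meets at most one path edge). Because the chords in $M$ are pairwise nonadjacent, I can then assign the third color to all of them at once without creating a conflict, and no chord clashes with a path edge. This yields a proper $3$-edge-coloring of $G-S$, whence $\rho(G)\le |V(G)|-\mathrm{circ}(G)+1$, and the claimed inequality for $\mathrm{circ}(G)$ follows.

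The only genuine subtlety—and the point I would flag as the crux—is the parity phenomenon that forces the ``$+1$''. If one tried to delete the off-cycle vertices only, the residue would be $C\cup M$; when $\ell$ is even this is still easily colorable (color $C$ alternately and all chords with the third color), but when $\ell$ is odd the two cycle-vertices flanking the unavoidable ``defect'' edge of a $3$-edge-colored odd cycle leave \emph{different} colors free, so a chord meeting such a vertex need not be colorable. Deleting a single cycle vertex removes precisely this obstruction by converting the odd cycle into a path that admits an honest $2$-edge-coloring; recognizing this, rather than the routine matching and coloring bookkeeping, is where the argument actually has to be careful.
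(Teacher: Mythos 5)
Your proof is correct and takes essentially the same route as the paper: both pass through Steffen's theorem $\rho(G)=r_3(G)$ and then $3$-edge-color the induced subgraph on the vertices of a longest cycle, using that its chords form a matching (alternating colors on the cycle/path, third color on the chords). The only difference is cosmetic: you argue directly and delete one cycle vertex regardless of parity, coloring a Hamiltonian path plus a matching, whereas the paper argues by contradiction and splits into even/odd cases, deleting the extra vertex only when the cycle is odd.
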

	\begin{proof}
		Assume that $G$ has a cycle $C$ with $|V(C)|> |V(G)|-r_3(G)+1$. Now consider the graph $G'=G-(V(G)-V(C))$. If $|V(C)|$ is even, then this graph is 3-edge colorable, since we can color the edges of $C$ alternating between colors 1 and 2, and the cords of $C$ with color 3. However this gives $\rho(G)<r_3(G)-1$ contradicting Theorem \ref{thm:steffen}. If, on the other hand, $|V(C)|$ is odd, then obviously $G'-v$ is 3-edge colorable for any vertex $v$ in $C$. We now have $\rho(G)<r_3(G)$ which once again contradicts Theorem \ref{thm:steffen}. 
	\end{proof}
	From Lemma \ref{lemma:circ} and Theorem \ref{thm:blowup} we can see that the graphs obtained from the blowup and semiblowup constructions cannot have too long cycles.
	\begin{corollary}
		Given a cubic graph $G$ and a 2-regular subgraph $D$ with disjoint cycles $C_1,C_2,\dots,C_k$, then $circ(Blowup(G,D))\leq |V(G)|- \sum_{i=1}^k \lceil\frac{|V(C_i)|}{2}\rceil-1$ and the same holds for $SemiBlowup(G,D)$. 
	\end{corollary}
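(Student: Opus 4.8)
The plan is to combine the two results just established, Lemma~\ref{lemma:circ} and Theorem~\ref{thm:blowup}, and then to sharpen the additive constant by exploiting a rigidity of the gadget $B$ that is invisible to a colouring argument. Write $H=\mathrm{Blowup}(G,D)$ and let $n=|V(H)|$ be the order of the blown-up graph (the leading term on the right-hand side of the statement); the argument for $\mathrm{SemiBlowup}(G,D)$ will be the same. From Lemma~\ref{lemma:circ} we get $\mathrm{circ}(H)\le n-r_3(H)+1$, and from Theorem~\ref{thm:blowup} we have $r_3(H)\ge\sum_i\lceil|V(C_i)|/2\rceil$, so a longest cycle of $H$ already omits at least $\sum_i\lceil|V(C_i)|/2\rceil-1$ of its vertices. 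The whole content of the Corollary is thus to gain two further omitted vertices, i.e. to show that a longest cycle omits at least $\sum_i\lceil|V(C_i)|/2\rceil+1$ vertices of $H$.

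The extra leverage comes from a structural fact about $B$. First I would prove that $B$ has a Hamiltonian path between two of its four link vertices only for the two pairs $\{a_1,a_2\}$ and $\{b_1,b_2\}$ — precisely the pairs forced monochromatic in Lemma~\ref{samecol} — and, more generally, that any spanning family of internally disjoint paths of $B$ whose endpoints are the four link vertices matches them again in these two ``diagonal'' pairs. A non-diagonal full covering of $B$ does exist, but only by using an internal cycle, so it is available to a $2$-factor and not to a single cycle. Hence, whenever a single cycle $\Gamma$ runs through all eight vertices of a copy $B_j$, it enters and leaves through a diagonal pair, attaching either to $\{u_j,w_j\}$ through $b_1^j,b_2^j$, or to $\{u_{j+1},w_{j+1}\}$ through $a_1^j,a_2^j$, or through all four link vertices in the two diagonal pairs at once.

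I would then propagate this constraint along each cycle $C_i$ of $D$. Imposing at every attaching vertex $v_j$ and every auxiliary vertex $u_j,w_j$ the local condition that exactly two incident edges lie on $\Gamma$, the diagonal rule forces $u_jv_j$ and $v_jw_j$ to be used or unused together, and hence forbids the third edge at $v_j$ (into $G\setminus D$, or a chord of $D$) from lying on $\Gamma$. Carrying this around $C_i$ shows that a cycle covering every gadget in full necessarily breaks up into several vertex-disjoint closed walks instead of one cycle; reassembling them into the single cycle $\Gamma$ therefore forces at least two gadgets to be entered incompletely, each losing a vertex, or equivalently at least two attaching or outside-$D$ vertices to be skipped. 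These two omissions are disjoint from the vertex-cover set charged to $r_3(H)$ in the first paragraph, and together they supply the missing $\sum_i\lceil|V(C_i)|/2\rceil+1$ omitted vertices, giving $\mathrm{circ}(H)\le n-\sum_i\lceil|V(C_i)|/2\rceil-1$.

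The hard part is exactly this last step: turning the local diagonal-port parities into a global count that is uniform in $G$ and $D$ and does not double-count against the resistance bound. The most delicate case is when $D$ is a spanning $2$-regular subgraph, so that there are no vertices of $G$ outside $D$ to absorb the forced skips; there the two extra omitted vertices must be located inside the gadget-and-connector structure itself, and one must check that the parity obstruction survives both for the blowup and for the connector-free semiblowup, where the roles of $u_j,w_j$ are played directly by the attaching vertices.
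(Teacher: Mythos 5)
Your first paragraph reproduces what is, in fact, the paper's entire proof: the corollary is presented as an immediate consequence of Lemma~\ref{lemma:circ} and Theorem~\ref{thm:blowup}, with no further argument, and chaining those two results gives $\mathrm{circ}(H)\leq |V(H)|-\sum_{i=1}^k\lceil |V(C_i)|/2\rceil+1$ for $H=\mathrm{Blowup}(G,D)$. You are right that this falls short of the printed bound by an additive $2$; the paper never addresses that discrepancy, and the printed ``$-1$'' (together with the loose use of $|V(G)|$ for the order of the blown-up graph) is best read as a slip for ``$+1$'', since the text supplies nothing that could buy the two extra omitted vertices. So the honest part of your proposal coincides with the paper's derivation, and to your credit you noticed a mismatch that the paper glosses over.

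The attempt to actually earn the extra two vertices is where the genuine gap lies. Your local claims about $B$ are true and provable exactly as you hint: a Hamiltonian path of $B$ between a former neighbour of one deleted vertex and a former neighbour of the other would close, via the edge joining the two deleted vertices, to a Hamiltonian cycle of the Petersen graph, which does not exist, while the diagonal pairs $\{a_1,a_2\}$ and $\{b_1,b_2\}$ do admit Hamiltonian paths because the Petersen graph is hypohamiltonian; mixed pairings of two spanning disjoint paths are excluded the same way. But the global step is only a plan, as you yourself concede. Two concrete failure points: first, the diagonal rule constrains only those gadgets that the cycle $\Gamma$ traverses \emph{completely}, and a longest cycle is free to skip vertices inside many gadgets, so ``propagating the constraint around $C_i$'' has no purchase in general. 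Second, your counting mixes two incompatible accounting schemes: the term $\sum_i\lceil |V(C_i)|/2\rceil$ enters through Lemma~\ref{lemma:circ}, whose proof runs through $\rho(G)=r_3(G)$ (Theorem~\ref{thm:steffen}), i.e.\ through deleting vertices to make the graph colorable, not through an explicit set of vertices omitted by $\Gamma$; hence the assertion that your two extra omissions ``are disjoint from the vertex-cover set charged to $r_3(H)$'' is not meaningful as stated. To improve the constant along your lines one would need something like $r_3(H)\geq\sum_i\lceil |V(C_i)|/2\rceil+2$, or a fully self-contained direct argument about longest cycles in the blowup; neither is supplied, so as a proof of the ``$-1$'' statement the proposal is incomplete, while its sound core is precisely the paper's own one-line combination yielding ``$+1$''.
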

	
\section{Concluding remarks}
	There are other constructions of snarks that are far from 3-edge colorable that are similar to the blowup and semiblowup constructions using other $k$-poles than $B$ (any $k$-pole which is either non-3-edge colorable or where the colors are forced as above works). See e.g. \cite{MR1385382}, \cite{Steffen:2004uq} and \cite{Nedela:1996kx} for more comprehensive studies of this. 
	
	It is somewhat unsatisfactory to require the assistance of a computer to show that the blowup of a 3-cycle in $K_4$ has $\tau=5$. Given a good characterization of this property, it might be possible to construct an infinite family of such snarks. 
	

\bibliographystyle{hplain}
\bibliography{snark_construction}

\end{document}